\title{Multidegree for bifiltered $D$-modules and hypergeometric systems}
\author{R\'emi Arcadias$^*$\\
Depto.\ de \'Algebra, Fac.\ Matem\'aticas, Campus Reina Mercedes\\
41012 Sevilla (Spain)\\
* E-mail: rarcadias@us.es}
\date{}
\newtheorem{proposition}{Proposition}[section]
\newtheorem{theorem}{Theorem}[section]
\newtheorem{lemma}{Lemma}[section]
\theoremstyle{definition}\newtheorem{example}{Example}[section]
\theoremstyle{definition}\newtheorem{definition}{Definition}[section]
\theoremstyle{definition}\newtheorem{question}{Question}[section]
\begin{document}

\maketitle

\begin{abstract}
In that paper, we recall the notion of the multidegree for $D$-modules, as exposed in a previous paper\cite{moi11}, with a slight simplification. 
A particular emphasis is given on hypergeometric systems, used to provide interesting and computable examples. 
\end{abstract}


\section*{Introduction}

This paper is an introduction to the theory of the multidegree for $D$-modules, as exposed in a previous paper\cite{moi11}. The multidegree has been defined by E.\ Miller\cite{stu05}: that is a generalization in multigraded algebra of the usual degree known in projective geometry. In our previous paper\cite{moi11} we adapted it to the setting of bifiltered modules over the ring $D$ of linear partial differential operators. Here the definition of the multidegree is slightly simplified: it becomes the identical counterpart, in the category of bifiltered $D$-modules, of the definition of Miller. We give detailed examples from the theory of $A$-hypergeometric systems of I.M.\ Gelfand, A.V.\ Zelevinsky and M.M.\ Kapranov\cite{GKZ}.

The paper is organized as follows. In Section 1 we recall the definition of the objects we are interested in, up to the definition of $(F,V)$-bifiltered free resolutions of $D$-modules, following T.\ Oaku and N.\ Takayama\cite{oaku01}. Then we define the multidegree for bifiltered $D$-modules in Section 2. Some examples of $A$-hypergeometric systems are discussed in Section 3, leading to open questions which generalize known facts about the holonomic rank of $A$-hypergeometric systems. In Section 4 we give details on the simplification of the definition of the multidegree we give in Section 2, which consists in proving that a $D$-module $M$ and its homogenization $\mathcal{R}_V(M)$ with respect to the $V$-filtration have same codimension.

\section{Bifiltered free resolution of $D$-modules}

Let $D=\mathbb{C}[x_1,\dots,x_n,t_1,\dots,t_p]\langle\partial_{x_1},\dots,\partial_{x_n},\partial_{t_1},\dots,\partial_{t_p}\rangle$ denote the Weyl algebra in $n+p$ variables. Denoting the monomial 
\[
x_1^{\alpha_1}\dots x_n^{\alpha_n}
t_1^{\mu_1}\dots t_p^{\mu_p}
\partial_{x_1}^{\beta_1}\dots\partial_{x_n}^{\beta_n}
\partial_{t_1}^{\nu_1}\dots\partial_{t_p}^{\nu_p}
\]
by $x^{\alpha}t^{\mu}\partial_x^{\beta}\partial_t^{\nu}$, every element $P$ in $D$ is written uniquely as a finite sum with complex coefficients
\begin{equation}\label{eq1}
P=\sum a_{\alpha,\beta,\mu,\nu}
x^{\alpha}t^{\mu}\partial_x^{\beta}\partial_t^{\nu}.
\end{equation}
The fundamental relations are $\partial_{x_i}x_i=x_i\partial_{x_i}+1$ for $i=1,\dots,n$ and 
$\partial_{t_i}t_i=t_i\partial_{t_i}+1$ for $i=1,\dots,p$.

We describe two importants filtrations on $D$. For $\lambda=(\lambda_1,\dots,\lambda_r)\in\mathbb{N}^r$, let $|\lambda|=\lambda_1+\dots+\lambda_r$.
Let $\mathrm{ord}^F(P)$ (resp.\ $\mathrm{ord}^V(P)$) be the maximum of 
$|\beta|+|\nu|$ (resp.\ $|\nu|-|\mu|$) over the monomials $x^{\alpha}t^{\mu}\partial_x^{\beta}\partial_t^{\nu}$ appearing in 
(\ref{eq1}). Then we define the filtrations 
\[
F_d(D)=\{P\in D, \mathrm{ord}^F(P)\leq d\}
\]
 for $d\in\mathbb{Z}$ (with $F_d(D)=\{0\}$ for $d<0$) and 
\[
V_k(D)=\{P\in D, \mathrm{ord}^V(P)\leq k\}
\]
for $k\in\mathbb{Z}$. The filtration $(F_d(D))$ is the most classical one, and the filtration $(V_k(D))$ is the so-called $V$-filtration along $t_1=\dots=t_p=0$ of Kashiwara-Malgrange.

The graded ring 
$\mathrm{gr}^F(D)=\oplus_d F_d(D)/F_{d-1}(D)$ is a commutative polynomial ring, whereas the graded ring $\mathrm{gr}^V(D)=\oplus_k V_k(D)/V_{k-1}(D)$ is isomorphic to $D$. 

Let $M$ be a left finitely generated $D$-module. We will define the notion of a good $F$-filtration of $M$. For $\mathbf{n}=(n_1,\dots,n_r)\in\mathbb{Z}^r$, let $D^r[\mathbf{n}]$ denote the free module $D^r$ endowed with the filtration 
\[
F_d(D^r[\mathbf{n}])=\oplus_{i=1}^r F_{d-i}(D). 
\]
A \emph{good} $F$-filtration of $M$ is a sequence $(F_d(M))_{d\in\mathbb{Z}}$ of sub-vector spaces of $M$ such that 
there exists a presentation 
\[
M\stackrel{\phi}{\simeq}\frac{D^r}{N},
\]
 with $N$ a sub-$D$-module of $D^r$, and a vector shift $\mathbf{n}\in\mathbb{N}^r$, such that
 \[
F_d(M)\stackrel{\phi}{\simeq} \frac{F_d(D^r[\mathbf{n}])+N}{N}.
\] 
The module $\mathrm{gr}^F(M)=\oplus_d F_d(M)/F_{d-1}(M)$ is a graded finitely generated module over $\mathrm{gr}^F(D)$. It is proved that the radical of the annihilator of $\mathrm{gr}^F(M)$ does not depend on the good filtration of $M$. Then  
$\mathrm{codim} M$ is defined as the codimension of the ring $\mathrm{gr}^F(D)/\mathrm{Ann}(\mathrm{gr}^F(M))$, which does not depend on the good filtration. A fundamental fact is that $\mathrm{codim}M\leq n+p$ if $M\neq 0$. When $\mathrm{codim}M=n+p$, the module $M$ is said to be holonomic.

Now we introduce the bifiltration 
\[
F_{d,k}(D)=F_d(D)\cap V_k(D)
\]
 for $d,k\in\mathbb{Z}$, and we define the notion of a good $(F,V)$-bifiltration of $M$. For $\mathbf{n}=(n_1,\dots,n_r)$ and $\mathbf{m}=(m_1,\dots,m_r)\in\mathbb{Z}^r$, let $D^r[\mathbf{n}][\mathbf{m}]$ denote the free $D$-module $D^r$ endowed with the bifiltration 
\[
F_{d,k}(D^r)=\oplus_{i=1}^r F_{d-i,k-i}(D). 
\]
A \emph{good} $(F,V)$-bifiltration of $M$ is a sequence $(F_{d,k}(M))_{d,k\in\mathbb{Z}}$ of sub-vector spaces of $M$ such that 
there exists a presentation 
\[
M\stackrel{\phi}{\simeq}\frac{D^r}{N},
\]
 with $N$ a sub-$D$-module of $D^r$, and vector shifts $\mathbf{n},\mathbf{m}\in\mathbb{N}^r$, such that
 \[
F_{d,k}(M)\stackrel{\phi}{\simeq} 
\frac{F_{d,k}(D^r[\mathbf{n}][\mathbf{m}])+N}{N}.
\] 
We are now in position to define the notion of a bifiltered free resolution of a module $M$ endowed with a good bifiltration $(F_{d,k}(M))_{d,k}$. Such a resolution is an exact sequence
\[
0\to D^{r_{\delta}}[\mathbf{n}^{(\delta)}][\mathbf{m}^{(\delta)}]
\stackrel{\phi_{\delta}}{\to}\cdots
\stackrel{\phi_1}{\to} D^{r_{0}}[\mathbf{n}^{(0)}][\mathbf{m}^{(0)}]
\stackrel{\phi_0}{\to} M\to 0
\]
such that for any $d,k\in\mathbb{Z}$, we have an exact sequence
\[
0\to F_{d,k}(D^{r_{\delta}}[\mathbf{n}^{(\delta)}][\mathbf{m}^{(\delta)}])
\stackrel{\phi_{\delta}}{\to}\cdots
\stackrel{\phi_1}{\to} F_{d,k}(D^{r_{0}}[\mathbf{n}^{(0)}][\mathbf{m}^{(0)}])
\stackrel{\phi_0}{\to} F_{d,k}(M)\to 0.
\]

\begin{example}\label{ex1}
Let $D=\mathbb{C}[t_1,t_2]\langle \partial_{t_1},\partial_{t_2}\rangle$.
Let $I$ be the ideal generated by $\partial_{t_1}-\partial_{t_2}$ and $t_1\partial_{t_1}+t_2\partial_{t_2}$, and $M=D/I$ endowed with the good bifiltration
$F_{d,k}(M)=(F_{d,k}(D)+I)/I$. That is the hypergeometric system $M_A(0,0)$ associated with 
$A=(\begin{array}{cc}
1 & 1
\end{array} ),$ 
see Section 3. 
We have a bifiltered free resolution:
\[
0\to D^1[2][1]\stackrel{\phi_2}{\to} D^2[1,1][1,0]\stackrel{\phi_1}{\to} D^1[0][0]\to M\to 0
\]
with 
\begin{eqnarray*}
\phi_1(1,0) & = & \partial_{t_1}-\partial_{t_2},\\ 
\phi_1(0,1) & = & t_1\partial_{t_1}+t_2\partial_{t_2},\\
\phi_2(1) & = &
(t_1\partial_{t_1}+t_2\partial_{t_2},-(\partial_{t_1}-\partial_{t_2})).
\end{eqnarray*}
\end{example}
 
Bifiltered free resolutions have been introduced by T.\ Oaku and N.\ Takayama\cite{oaku01}. The same authors\cite{oaku01b} show that such resolutions are at the heart of the computation of important objects in $D$-module theory: the restriction of a $D$-module along a smooth subvariety, the algebraic local cohomology, the tensor product and localization.

However, the $F$-filtration they use is slightly different from ours: that is the filtration by the total order (i.e.\ $x_i$, $t_i$, $\partial_{x_i}$ and $\partial_{t_i}$ are given the weight $1$ for all $i$), which allows them to define the notion of a \emph{minimal} bifiltered free resolution.
The ranks $r_i$, called Betti numbers, and the shifts $\mathbf{n}^{(i)},\mathbf{m}^{(i)}$ do not depend on the minimal bifiltered free resolution, but do depend on the bifiltration $(F_{d,k}(M))_{d,k}$.
Our choice for the filtration $F$ comes from the common use of it, together with the $V$-filtration, in the theory of slopes and irregularity for $D$-modules, see e.g.\ the course of Y.\ Laurent\cite{laurent04}. In our setting the notion of a \emph{minimal} bifiltered free resolution no longer makes sense. However, we will derive from the Betti numbers and shifts of an arbitrary bifiltered free resolution an invariant of the $D$-module $M$: the multidegree. 

\section{Multidegree for bifiltered $D$-modules}

The multidegree has been introduced by E.\ Miller\cite{stu05} in a commutative multigraded context. It is a generalization of the notion of the degree known in projective geometry. We adapt it to the setting of bifiltered $D$-modules. First, we define the $K$-polynomial of a bifiltered $D$-module.Let 
\[
0\to D^{r_{\delta}}[\mathbf{n}^{(\delta)}][\mathbf{m}^{(\delta)}]
\stackrel{\phi_{\delta}}{\to}\cdots
\stackrel{\phi_1}{\to} D^{r_{0}}[\mathbf{n}^{(0)}][\mathbf{m}^{(0)}]
\stackrel{\phi_0}{\to} M\to 0
\]
be a bifiltered free resolution of $M$.

\begin{definition}
The $K$-polynomial of $D^r[\mathbf{n}][\mathbf{m}]$ with respect to $(F,V)$ is defined by 
\[
K_{F,V}(D^r[\mathbf{n}][\mathbf{m}];T_1,T_2)=
\sum_{i=1}^r T_1^{\mathbf{n}_i} T_2^{\mathbf{m}_i}
\in \mathbb{Z}[T_1,T_2,T_1^{-1},T_2^{-1}].
\]
The $K$-polynomial of $M$ with respect to $(F,V)$ is defined by 
\[
K_{F,V}(M;T_1,T_2)=\sum_{i=0}^{\delta} 
(-1)^i K_{F,V}(D^{r_i}[\mathbf{n}^{(i)}][\mathbf{m}^{(i)}];T_1,T_2)
 \in \mathbb{Z}[T_1,T_2,T_1^{-1},T_2^{-1}].
\]
\end{definition}

The definition of $K_{F,V}(M;T_1,T_2)$ does not depend on the bifiltered free resolution (Proposition 3.2 of our previous paper\cite{moi11}), thus that is an invariant of the bifiltered module $(M,(F_{d,k}(M))_{d,k})$ very close to the data of the Betti numbers and shifts. 

\begin{example}[Continuation of Example \ref{ex1}]\label{ex1b} 
From the bifiltered free resolution
\[
0\to D^1[2][1]\stackrel{\phi_2}{\to} D^2[1,1][1,0]\stackrel{\phi_1}{\to} D^1[0][0]\to M\to 0,
\]
we compute the $K$-polynomial $K(M;T_1,T_2)=1-(T_1T_2+T_1)+T_1^2T_2$.
\end{example}

But $K_{F,V}(M;T_1,T_2)$ depends on the bifiltration chosen, as shown in the following example.

\begin{example}\label{ex2}
Let $D=\mathbb{C}[t]\langle\partial_t\rangle$ and $M=D$ endowed with the good bifiltration $F_{d,k}(M)=F_{d,k}(D)$.
Then $M$ admits the bifiltered free resolution 
\[
0\to D[0][0]\to M\to 0
\]
 and thus $K(M;T_1,T_2)=1$. Now let 
\[
M'=\frac{D^2}{D.(1,1)}.
\]
We have an isomorphism $M\simeq M'$, given by $1\mapsto \overline{(1,0)}$. We endow $M'$ with the bifiltration 
\[
F_{d,k}(M')=\frac{F_{d,k}(D^2[1,0][0,1])+D.(1,1)}{D.(1,1)}.
\]
Then we have the following bifiltered free resolution:
\[
0\to D[1][1]\stackrel{\phi}{\to} D^2[1,0][0,1]\to M'\to 0,
\]
with $\phi(1)=(1,1)$. Then $K(M',T_1,T_2)=T_1+T_2-T_1T_2$.
\end{example} 

We now define the multidegree.

\begin{definition}\label{def1}
 $K_{F,V}(M;1-T_1,1-T_2)$ is a well-defined power series in $T_1,T_2$.
We denote by $\mathcal{C}_{F,V}(M;T_1,T_2)$ the sum of the terms whose total degree in $T_1,T_2$ equals $\textrm{codim}\,M$ in the expansion of 
$K_{F,V}(M;1-T_1,1-T_2)$. This is called the \emph{multidegree} of $M$ with respect to $(F,V)$.
\end{definition}

\begin{example}[Continuation of Example \ref{ex1b}]
We have $\mathrm{codim}\,M=2$ and 
\begin{eqnarray*}
K(M;1-T_1,1-T_2)  & = & 1-((1-T_1)(1-T_2)+(1-T_1)) + ((1-T_1)^2(1-T_2))\\
 & = & (T_1^2+T_1T_2)-T_1^2T_2,
\end{eqnarray*} 
thus $\mathcal{C}(M;T_1,T_2)=T_1^2+T_1T_2$.
\end{example}

The multidegree $\mathcal{C}_{F,V}(M;T_1,T_2)$ is a coarser invariant than $K(M;T_1,T_2)$, but its advantage is that it does not depend on the good bifiltration. In Example \ref{ex2}, we have 
$\mathcal{C}_{F,V}(M;T_1,T_2)=\mathcal{C}_{F,V}(M';T_1,T_2)=1$.

\begin{theorem}\label{thm1}
$\mathcal{C}_{F,V}(M;T_1,T_2)$ does not depend on the good bifiltration of $M$.
\end{theorem}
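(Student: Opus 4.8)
The plan is to reduce the statement to E.\ Miller's invariance theorem for the multidegree in commutative multigraded algebra by passing to the associated bigraded module. Set $R=\mathrm{gr}^{F,V}(D)$, the commutative polynomial ring $\mathbb{C}[x,t,\xi,\eta]$ equipped with the $\mathbb{Z}^2$-grading in which $x_i$ has bidegree $(0,0)$, $t_i$ bidegree $(0,-1)$, $\xi_i=\sigma(\partial_{x_i})$ bidegree $(1,0)$ and $\eta_i=\sigma(\partial_{t_i})$ bidegree $(1,1)$. Given a good $(F,V)$-bifiltration on $M$ together with a bifiltered free resolution as above, I would apply $\mathrm{gr}^{F,V}$ term by term to obtain a $\mathbb{Z}^2$-graded complex of free $R$-modules; the defining exactness of the resolution at every level $F_{d,k}$ is exactly the strictness needed to guarantee that this associated-graded complex is again exact, so it is a graded free resolution of $G:=\mathrm{gr}^{F,V}(M)$. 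By construction $K_{F,V}(M;T_1,T_2)$ then coincides with Miller's $K$-polynomial of $G$ over $R$, with $T_1$ recording the $F$-degree and $T_2$ the $V$-degree.

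Next I would reconcile the two codimensions so that Definition \ref{def1} really extracts Miller's multidegree of $G$. The degree used in Definition \ref{def1} is $\mathrm{codim}\,M$, defined through $\mathrm{gr}^F(M)$, whereas Miller's construction for $G$ uses $\mathrm{codim}_R\,G$. Since the $V$-filtration descends to a good filtration on $\mathrm{gr}^F(M)$ with $G=\mathrm{gr}^V(\mathrm{gr}^F(M))$, and passing to the associated graded of a good filtration preserves the dimension of the support, one obtains $\mathrm{codim}_R\,G=\mathrm{codim}\,\mathrm{gr}^F(M)=\mathrm{codim}\,M$; this is precisely the content announced for Section 4, namely that $M$ and its $V$-homogenization $\mathcal{R}_V(M)$ have the same codimension. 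With this identification, $\mathcal{C}_{F,V}(M;T_1,T_2)$ is exactly Miller's multidegree of the bigraded $R$-module $G$.

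The remaining, and main, difficulty is that $G=\mathrm{gr}^{F,V}(M)$ itself depends on the chosen good bifiltration, so the reduction to Miller does not by itself give invariance. Here I would invoke the additivity characterization of the multidegree (Miller--Sturmfels): the multidegree of $G$ is determined by its top-dimensional associated cycle $[G]_{\mathrm{top}}=\sum_P \mathrm{mult}_P(G)\,[R/P]$, the sum running over the minimal primes $P$ of $G$ of maximal dimension (equivalently codimension $\mathrm{codim}\,M$), with lower-dimensional components contributing nothing. Thus the theorem reduces to showing that $[G]_{\mathrm{top}}$ is independent of the good bifiltration. I would establish this by writing $[G]_{\mathrm{top}}$ as the $V$-initial degeneration of the $F$-characteristic cycle of $M$: the top cycle of $\mathrm{gr}^F(M)$ is the characteristic cycle of $M$ and is already known to be independent of the good $F$-filtration, while degenerating a fixed cycle to its initial cycle with respect to the fixed $V$-weights preserves dimension and multiplicities and yields a well-defined cycle. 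A convenient way to package this comparison is through the bi-Rees module $\bigoplus_{d,k}F_{d,k}(M)$ over the bi-Rees algebra, in which two good bifiltrations of $M$ appear as two specializations of a single module and the top cycle is constant in the resulting flat family.

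I expect the genuine obstacle to be this last step: controlling how the joint associated graded $\mathrm{gr}^{F,V}(M)$ varies with the bifiltration and proving that, although $G$ and its full $K$-polynomial do change (as Example \ref{ex2} shows), the top-dimensional cycle $[G]_{\mathrm{top}}$, and hence the multidegree, does not. As a sanity check that isolates the easy part of the variation, replacing a good bifiltration by a globally shifted one multiplies $K_{F,V}(M;T_1,T_2)$ by a monomial $T_1^{a}T_2^{b}$; after the substitution $T_i\mapsto 1-T_i$ this contributes the factor $(1-T_1)^a(1-T_2)^b$, whose constant term is $1$, so the lowest-degree part at total degree $\mathrm{codim}\,M$ is unchanged. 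The force of the theorem lies precisely in extending this cancellation to changes of bifiltration that are not mere shifts, which is exactly what the top-cycle argument must supply.
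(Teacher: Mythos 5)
Your reduction to the commutative bigraded picture (strictness of a bifiltered resolution gives a bigraded free resolution of $G=\mathrm{gr}^{F,V}(M)$ over $R$, with matching $K$-polynomials) is sound, and it is indeed the starting point of the approach in the paper's predecessor\cite{moi11}. The genuine gap is at the step you yourself flag as the main difficulty, and the packaging you offer there does not close it. Two distinct good bifiltrations of $M$ do \emph{not} arise as two specializations, or two fibers, of a single bi-Rees module: the bi-Rees module $\bigoplus_{d,k}F_{d,k}(M)$ is constructed from one chosen bifiltration, a second bifiltration produces a different bi-Rees module, and no flat family containing both $G^{(1)}$ and $G^{(2)}$ as fibers is ever produced in your argument. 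Likewise the identity $[G]_{\mathrm{top}}=\mathrm{in}_V(CC(M))$, asserted for \emph{every} good bifiltration, is not an available input: it presupposes that the top cycle of the $V$-degeneration is independent of the good $V$-structure used to perform the degeneration, which is exactly the invariance being proven, restated one level down. What fills this hole in the paper is the argument of Laurent and Monteiro-Fernandes\cite{laurent88}, invoked through Theorem 3.1 of the previous paper\cite{moi11}: one compares two good bifiltrations directly, using that any two are sandwiched up to shifts ($F^{(1)}_{d-c,k-c}(M)\subset F^{(2)}_{d,k}(M)\subset F^{(1)}_{d+c,k+c}(M)$ for some $c$), and deduces that the relevant lowest-degree parts of the two $K$-polynomials agree. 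Nothing in your proposal substitutes for this comparison.

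There is a second, more technical problem: Miller's machinery\cite{stu05} (in particular the additivity theorem saying the multidegree depends only on the top-dimensional cycle) is developed for \emph{positively} multigraded polynomial rings, and your $R$ is not positively bigraded, since each $x_i$ has bidegree $(0,0)$. Graded pieces are then infinite-dimensional, and bigraded primes need not be conical in the $x$-directions; for instance $R/(x_1-1)$ is a nonzero bigraded module of codimension one whose $K$-polynomial is $0$, so the dictionary between cycles and multidegrees degenerates and Miller's theorems cannot be cited off the shelf. This is precisely why the previous paper\cite{moi11} tensors with $\mathbb{K}=\mathrm{Frac}\,\mathbb{C}[x]$ before speaking of codimension, and why the present paper, having dropped $\mathbb{K}$ for simplicity, must prove Proposition \ref{prop1} ($\mathrm{codim}\,\mathcal{R}_V(M)=\mathrm{codim}\,M$, via the Ext-characterization of codimension and Gelfand--Kirillov dimension) so that Definition \ref{def1} extracts terms in the correct degree. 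Your one-line justification of $\mathrm{codim}_R\,G=\mathrm{codim}\,M$ ("associated graded of a good filtration preserves support dimension") papers over what is, in this paper, a two-lemma proof occupying all of Section 4. In short, the easy parts of your plan (reduction to the bigraded setting, invariance under global shifts) are fine, but both nontrivial ingredients --- the codimension identification and, above all, the invariance of the top-dimensional cycle under change of good bifiltration --- remain unproven.
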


\begin{proof}
This theorem is similar to Theorem 3.1 of our previous paper\cite{moi11}, proved using an argument from Y.\ Laurent and T.\ Monteiro-Fernandes\cite{laurent88}. But our definition of the multidegree is slightly simpler than that given in our previous paper\cite{moi11}. Let $\mathbb{K}$ denote the fraction field of $\mathbb{C}[x]$ and $\mathcal{R}_V(M)$ denote the Rees module associated with $M$ considered as a $V$-filtered module. $\mathcal{R}_V(M)$ is naturally endowed with a $F$-filtration. In our previous paper\cite{moi11}, we have defined the multidegree as the sum of the terms whose total degree equals 
$\textrm{codim}(\mathbb{K}\otimes\textrm{gr}^{F}(\mathcal{R}_{V}(M)))$ in the expansion of 
$K_{F,V}(M;1-T_1,1-T_2)$. Here for the sake of simplicity we no longer use $\mathbb{K}$: we define the multidegree as the sum of the terms whose total degree equals $\textrm{codim}(\textrm{gr}^{F}(\mathcal{R}_{V}(M)))$. The proof of the invariance also works.

The remaining problem, so as to be in accordance with Definition \ref{def1}, is to prove that 
$\textrm{codim}(\textrm{gr}^{F}(\mathcal{R}_{V}(M)))=\textrm{codim}\,M$. We postpone the proof of it to Section 4. 
\end{proof}

\section{Examples from the theory of hypergeometric systems}

Let us consider a class of $D$-modules introduced by I.\ M.\ Gelfand, M.\ M.\ Kapranov and A.\ V.\ Zelevinsky\cite{GKZ}, generalizing the Gauss hypergeometric equations, called $A$-hypergeometric systems. They are constructed as follows. In this section $D=\mathbb{C}[x_1,\dots,x_n]\langle \partial_1,\dots,\partial_n\rangle$. Let $A$ be a $d\times n$ integer matrix and $\beta_1,\dots,\beta_d$ be complex numbers. We assume that the abelian group generated by the columns $a_1,\dots,a_n$ of $A$ is equal to $\mathbb{Z}^d$. One defines first the toric ideal $I_A$: that is the ideal of 
$\mathbb{C}[\partial_1,\dots,\partial_n]$ generated by the elements $\partial^u-\partial^v$ with $u,v\in\mathbb{N}^n$ such that $A.u=A.v$. Then one defines the hypergeometric ideal $H_A(\beta)$: that is the ideal of $D$ generated by $I_A$ and the elements 
$\sum_j a_{i,j}x_j\partial_j-\beta_i$ for $i=1,\dots,d$.  Finally the $A$-hypergeometric system $M_A(\beta)$ is defined by the quotient $D/H_A(\beta)$. 
A.\ Adolphson\cite{ado} proved (in the general case) that these $D$-modules are holonomic.  
As do M.\ Schulze and U.\ Walther\cite{SW}, we assume that the columns of $A$ lie in a single open halfspace.
The book of M.\ Saito, B.\ Sturmfels and N.\ Takayama\cite{SST} is a complete reference on (homogeneous) $A$-hypergeometric systems.

Our purpose in that section is to give some calculations of multidegree for hypergeometric systems. We will make the $V$-filtration of $D$ vary, but the module $M_A(\beta)$ will always be endowed with the bifiltration 
$F_{d,k}(M_A(\beta))=(F_{d,k}(D)+H_A(\beta))/H_A(\beta)$ once the $V$-filtration of $D$ is chosen. The computations are done using the computer algebra systems Singular\cite{singular} and Macaulay2\cite{M2}. 

\subsection{$V$-filtration along the origin}

At first we consider the $V$-filtration along $x_1=\dots=x_n=0$. 
\begin{example}\label{ex3}
 Let 
$$
A=\left(
\begin{array}{cccc}
1 & 1& 1 & 1\\
0 & 1 & 2 & 3
\end{array}
\right).
$$
Then $I_A$ is generated by 
$\partial_2\partial_4-\partial_3^2,
\partial_1\partial_4-\partial_2\partial_3,
\partial_1\partial_3-\partial_2^2.$ The ideal $H_A(\beta)$ is generated by $I_A$ and the elements $x_1\partial_1+x_2\partial_2+x_3\partial_3+x_4\partial_4-\beta_1$; $x_2\partial_2+2x_3\partial_3+3x_4\partial_4-\beta_2$.
For all $\beta$, we have
\[
\mathcal{C}_{F,V}(M_A(\beta);T_1,T_2)=3T_1^4+6T_1^3T_2+3T_1^2T_2^2
=3T_1^2(T_1+T_2)^2.
\]
\end{example}
Let  $\mathrm{vol}(A)$ denote the normalized volume 
(with $\mathrm{vol}([0,1]^d)=d!$) of the convex hull in $\mathbb{R}^d$ of the set $\{0,a_1,\dots,a_n\}$. Then we have in Example \ref{ex3}:
\begin{equation}\label{eq2}
\mathcal{C}_{F,V}(M_A(\beta);T_1,T_2)=\mathrm{vol}(A).T_1^d(T_1+T_2)^{n-d}.
\end{equation} 
Let us take a homogenizing variable $h$ and let $H(I_A)\subset\mathbb{C}[\partial,h]$ denote the homogenization of $I_A$ with respect to the total order in $\partial_1,\dots,\partial_n$. We proved the following (Theorem 5.2 of our previous paper\cite{moi11}): if 
$\mathbb{C}[\partial,h]/H(I_A)$ is a Cohen-Macaulay ring and the parameters $\beta_1,\dots,\beta_d$ are generic, then the formula (\ref{eq2}) holds.

\begin{question}
Does the formula (\ref{eq2}) hold if $\beta_1,\dots,\beta_d$ are generic, but without the Cohen-Macaulay assumption ? 
\end{question}
For a holonomic module $M$, let us write $\mathcal{C}_{F,V}(M;T_1,T_2)=
b_0(M)T_1^n+b_1(M)T_1^{n-1}T_2+\dots+b_n(M)T_2^n$. From (\ref{eq2}) we have $b_0(M_A(\beta))=\mathrm{vol}(A)$.
We claim that the latter equality holds for all $A$, if $\beta_1,\dots,\beta_d$ are generic. Indeed if $M$ is any holonomic $D$-module we have $b_0(M)=\mathrm{rank}(M)$ (Remark 5.1 of our previous paper\cite{moi11}), where $\mathrm{rank}(M)$, the holonomic rank of $M$, is the dimension of the vector space of local holomorphic solutions of $M$ (considered as a system of linear partial differential equations) at a generic point. Let us remark that the niceness assumption in Remark 5.1 of our previous paper\cite{moi11} can be dropped because of Proposition \ref{prop1}. Now, by Adolphson\cite{ado},
 $\mathrm{rank}(M_A(\beta))=\mathrm{vol}(A)$ for generic $\beta_1,\dots,\beta_d$, which concludes to prove our claim. 
 
\begin{example}[from Saito-Sturmfels-Takayama\cite{SST}]\label{ex4}
 Let 
$$
A=\left(
\begin{array}{cccc}
1 & 1& 1 & 1\\
0 & 1 & 3 & 4
\end{array}
\right).
$$
Then $I_A$ is generated by 
$\partial_2\partial_4^2-\partial_3^3,\partial_1\partial_4-\partial_2\partial_3,\partial_1\partial_3^2-\partial_2^2\partial_4,\partial_1^2\partial_3-\partial_2^3$. 
Here $\mathbb{C}[\partial,h]/H(I_A)$ is not Cohen-Macaulay. However for $(\beta_1,\beta_2)\neq (1,2)$, we have 
\[
\mathcal{C}_{F,V}(M_A(\beta);T_1,T_2)=4T_1^4+8T_1^3T_2+4T_1^2T_2^2
=4T_1^2(T_1+T_2)^2,
\]
which agrees with the formula (\ref{eq2}).

For $(\beta_1,\beta_2)= (1,2)$, we have 
\[
\mathcal{C}_{F,V}(M_A(\beta);T_1,T_2)=
5T_1^4+12T_1^3T_2+10T_1^2T_2^2+4T_1T_2^3+T_2^4.
\]
Let us remark that $(T_1+T_2)^2$ is still a factor, indeed 
$$
5T_1^4+12T_1^3T_2+10T_1^2T_2^2+4T_1T_2^3+T_2^4=
(T_1+T_2)^2(5T_1^2+2T_1T_2+T_2^2).
$$
\end{example}  

\subsection{$V$-filtration along coordinate hyperplanes}

Let us reconsider Examples \ref{ex3} and \ref{ex4}, with the $V$-filtration along $x_i=0$ for some fixed $i$. 

\begin{example}[continuation of Example \ref{ex3}] $ $

\begin{itemize}
\item $V$-filtration along $x_1=0$: for any $\beta_1,\beta_2$,
$$\mathcal{C}_{F,V}(M_A(\beta))=3T_1^4+2T_1^3T_2.$$
\item $V$-filtration along $x_2=0$: for any $\beta_1,\beta_2$,
$$\mathcal{C}_{F,V}(M_A(\beta))=3T_1^4+3T_1^3T_2.$$
\item $V$-filtration along $x_3=0$: same as $V$-filtration along $x_2=0$.
\item $V$-filtration along $x_4=0$: same as $V$-filtration along $x_1=0$.
\end{itemize}
\end{example}  

\begin{example}[Continuation of Example \ref{ex4}]\label{ex4b}
Let us take the $V$-filtration along $x_4=0$.
Then if $(\beta_1,\beta_2)\neq(1,2)$, then
$$\mathcal{C}_{F,V}(M_A(\beta))=4T_1^4+3T_1^3T_2.$$
For  $(\beta_1,\beta_2)=(1,2)$, we have
$$\mathcal{C}_{F,V}(M_A(\beta))=5T_1^4+4T_1^3T_2.$$
\end{example}

\subsection{Dependency of the multidegree on the parameters}

Studying the dependency of the multidegree on the parameters $\beta_1,\dots,\beta_d$ is a natural problem. A basic known fact is that the multidegree remains constant outside of an algebraic hypersurface of $\mathbb{C}^d$ (analogous to Proposition 1.5 of our previous paper\cite{moi11}). 
The depedency of the holonomic rank of $M_A(\beta)$ has been deeply studied by several authors, for instance in Saito-Sturmfels-Takayama\cite{SST} and L.F.\ Matusevich, E.\ Miller and U.\ Walther\cite{MMW}. In particular it has been proved in the latter paper that the holonomic rank of $M_A(\beta)$ is upper semi-continuous as a function on the parameter $(\beta_1,\dots,\beta_d)$, which means that the holonomic rank at an exceptionnal parameter 
is greater than the holonomic rank at a generic parameter. Thus the coefficient $b_0(M_A(\beta))$ is an upper semi-continuous function on $\beta_1,\dots,\beta_d$. Nobuki Takayama pointed out the problem of that semi-continuity for the other coefficients of the multidegree, as also suggest Examples \ref{ex4} and \ref{ex4b}. 

\begin{question}
Let us fix the matrix $A$, the $V$-filtration on $D$ and $0\leq i\leq n$.
Is the coefficient $b_i(M_A(\beta))$ a upper semi-continuous function on $\beta_1,\dots,\beta_d$ ?
\end{question}

\subsection{Positivity}
The following positivity problem is due to an observation by Michel Granger.

\begin{question}
Let us fix $A$, $\beta_1,\dots,\beta_d$, the $V$-filtration on $D$ and $0\leq i\leq n$.
Do we always have $b_i(M_A(\beta))\geq 0$ ?
\end{question} 
That is the case in all the examples we considered. Moreoever, since $b_0(M_A(\beta))=\mathrm{rank}(M_A(\beta))$, the positivity is true for $b_0(M_A(\beta))$.

\section{Proof of Theorem \ref{thm1}}

Here we complete the proof of Theorem \ref{thm1}. 
Again $D=\mathbb{C}[x,t]\langle\partial_x,\partial_t\rangle.$
We recall the definition of the Rees ring $\mathcal{R}_V(D)=\oplus V_k(D)T^k$ endowed with the filtration $F_d(\mathcal{R}_V(D))=\oplus_k F_{d,k}(D)T^k$. 
We have a ring isomorphism $\mathcal{R}_V(D)\simeq D[\theta]$ given by
$x_iT^0\mapsto x_i$; $t_iT^{-1}\mapsto t_i$; $T^1\mapsto\theta$; $\partial_{x_i}T^0\mapsto\partial_{x_i}$ and $\partial_{t_i}T^1\mapsto\partial_{t_i}$. The $F$-filtration on $D[\theta]$ induced by this isomorphism is given by assigning the weights $(\mathbf{0},\mathbf{0},\mathbf{1},\mathbf{1},0)$ to $(\mathbf{x},\mathbf{t},\partial_{\mathbf{x}},\partial_{\mathbf{t}},\theta)$. Suppose that, as a bifiltered $D$-module, $M=D^r[\mathbf{n}][\mathbf{m}]/N$. We endow $M$ with the $V$-filtration $V_k(M)=V_k(D^r[\mathbf{m}]/N)$. 

Then we associate with $M$ a Rees module $\mathcal{R}_V(M)=\oplus V_k(M)T^k$ endowed with the $F$-filtration $F_d(\mathcal{R}_V(M))=\oplus_k F_{d,k}(M)T^k$.
We have an isomorphism of graded modules  
\[
\frac{D[\theta]^r[\mathbf{m}]}{H^V(N)}\simeq \mathcal{R}_V(M)
\] 
given by $\overline{e}_i\mapsto\overline{e}_iT^{m_i}$, with $(e_i)$ the canonical base either of $D[\theta]^r$ or of $D^r$, and $H^V(N)$ the homogenization of $N$ with respect to $V$. Furthermore it is an isomorphism of $F$-filtered modules
 \[
\frac{D[\theta]^r[\mathbf{n}]}{H^V(N)}\simeq \mathcal{R}_V(M).
\] 

We denote by $\mathrm{codim}\mathcal{R}_V(M)$ the codimension of the module $\mathrm{gr}^F(\mathcal{R}_V(M))$. In fact it is allowed to replace the weight vector 
$(\mathbf{0},\mathbf{0},\mathbf{1},\mathbf{1},0)$ defining the $F$-filtration by any non-negative weight vector $G$, giving rise to a filtration on $D[\theta]$ also denoted by $G$, such that $\mathrm{gr}^G(D[\theta])$ is a commutative ring (see Proposition 5.1 of our previous paper\cite{moi11}).

\begin{proposition}\label{prop1}
$\mathrm{codim}\mathcal{R}_V(M)=\mathrm{codim}M$.
\end{proposition}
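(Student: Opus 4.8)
The plan is to reduce the equality of codimensions to a single statement about Krull dimensions, and then to exploit the fact that $\theta$ plays the role of a homogenizing variable for the $V$-grading. Write $R=\mathrm{gr}^F(D)$, a polynomial ring in $2(n+p)$ variables, so that $\mathrm{gr}^F(\mathcal{R}_V(D))=R[\theta]=:S$ has exactly one more variable. By the definitions recalled above, $\mathrm{codim}\,M=\dim R-\dim\mathrm{gr}^F(M)$ and $\mathrm{codim}\,\mathcal{R}_V(M)=\dim S-\dim\mathrm{gr}^F(\mathcal{R}_V(M))$, both valid since $R$ and $S$ are regular hence catenary. As $\dim S=\dim R+1$, the assertion is equivalent to the single dimension equality
\[
\dim\mathrm{gr}^F(\mathcal{R}_V(M))=\dim\mathrm{gr}^F(M)+1.
\]
So it suffices to show that forming the Rees module raises the dimension of the $F$-characteristic variety by exactly one.

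First I would record the bigraded structure and the dehomogenization at $\theta=1$. The module $\mathrm{gr}^F(\mathcal{R}_V(M))=\oplus_{d,k}\bigl(F_{d,k}(M)/F_{d-1,k}(M)\bigr)T^k$ is graded by the $V$-degree $k$ carried by $T$, and in $S$ the class of $\theta=T$ is homogeneous of $V$-degree $1$ and $F$-degree $0$. Since the $V$-filtration is exhaustive one gets $\varinjlim_k F_{d,k}(M)=F_d(M)$; passing to the colimit (along multiplication by $\theta$) of the short exact sequences $0\to F_{d-1,k}(M)\to F_{d,k}(M)\to F_{d,k}(M)/F_{d-1,k}(M)\to 0$ then yields an isomorphism of $R$-modules
\[
\mathrm{gr}^F(\mathcal{R}_V(M))/(\theta-1)\,\mathrm{gr}^F(\mathcal{R}_V(M))\cong\mathrm{gr}^F(M).
\]
Hence, as varieties, $\mathrm{Supp}_S\bigl(\mathrm{gr}^F(\mathcal{R}_V(M))\bigr)\cap\{\theta=1\}$ is the $F$-characteristic variety of $M$, of dimension $\dim\mathrm{gr}^F(M)$.

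The geometric core is then a cone argument. The $V$-grading defines a $\mathbb{C}^*$-action on $\mathrm{Spec}\,S$ for which $Z:=\mathrm{Supp}_S(\mathrm{gr}^F(\mathcal{R}_V(M)))$ is invariant and $\theta$ has weight $1$. If $\theta$ is a non-zero-divisor on $\mathrm{gr}^F(\mathcal{R}_V(M))$, then $\theta$ lies in no associated prime, so no component of $Z$ is contained in $\{\theta=0\}$ and $Z\cap\{\theta\neq0\}$ is dense in $Z$. On this locus the action is free in the $\theta$-direction, and $p\mapsto\theta(p)^{-1}\cdot p$ trivializes it, giving $Z\cap\{\theta\neq0\}\cong(Z\cap\{\theta=1\})\times\mathbb{C}^*$. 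Combining the two displays, $\dim Z=\dim(Z\cap\{\theta=1\})+1=\dim\mathrm{gr}^F(M)+1$, which is exactly the dimension equality sought.

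The hard part, and the only genuinely nontrivial input, is therefore the non-zero-divisor property of $\theta$ on $\mathrm{gr}^F(\mathcal{R}_V(M))$. A direct computation shows this is equivalent to the $F$-strictness of multiplication by $\theta$ on $\mathcal{R}_V(M)$, that is, to the compatibility $F_{d,k+1}(M)\cap V_k(M)=F_{d,k}(M)$ for all $d,k$. While $\theta$ is clearly a non-zero-divisor on the Rees module $\mathcal{R}_V(M)\cong D[\theta]^r/H^V(N)$ itself, because homogenization submodules are $\theta$-saturated, transporting this to the $F$-associated graded is precisely where the good bifiltration is essential: I would deduce it from the existence of an $(F,V)$-bifiltered free resolution, whose defining strictness makes $\mathrm{gr}^F$ exact on the associated Rees complex and forces the required compatibility. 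This is the analogue, in the bifiltered setting, of the argument of Laurent and Monteiro-Fernandes\cite{laurent88} already invoked in the proof of Theorem \ref{thm1}.
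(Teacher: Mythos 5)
Your overall strategy has several sound pieces: the reduction to the dimension equality $\dim\mathrm{gr}^F(\mathcal{R}_V(M))=\dim\mathrm{gr}^F(M)+1$, the dehomogenization isomorphism $\mathrm{gr}^F(\mathcal{R}_V(M))/(\theta-1)\,\mathrm{gr}^F(\mathcal{R}_V(M))\cong\mathrm{gr}^F(M)$ (your colimit argument needs no strictness), and the $\mathbb{C}^*$-slice argument over $\{\theta\neq 0\}$ are all correct. Indeed they already yield one inequality unconditionally: each component $C$ of $Z$ not contained in $\{\theta=0\}$ satisfies $\dim C=\dim(C\cap\{\theta=1\})+1$, and $Z\cap\{\theta=1\}$ is the union of these slices, so $\dim Z\geq\dim\mathrm{gr}^F(M)+1$, i.e.\ $\mathrm{codim}\,\mathcal{R}_V(M)\leq\mathrm{codim}\,M$; this recovers, by a geometric route, the first lemma of Section 4, which the paper proves via the Ext-characterization of codimension. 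The gap is in the reverse inequality: your key claim, that $\theta$ is a non-zero-divisor on $\mathrm{gr}^F(\mathcal{R}_V(M))$ for every good bifiltration (equivalently, the compatibility $F_{d,k}(M)\cap F_{d-1,k+1}(M)=F_{d-1,k}(M)$), is false, and the paper's own Example~\ref{ex2} is a counterexample. There $D=\mathbb{C}[t]\langle\partial_t\rangle$ and $M'=D^2/D.(1,1)$ carries the bifiltration induced by $D^2[1,0][0,1]$; transporting it through $M'\cong D$, $\overline{(P,Q)}\mapsto P-Q$, gives $F_{d,k}(M')=F_{d-1,k}(D)+F_{d,k-1}(D)$, hence
\[
F_{1,0}(M')=\mathbb{C}[t]+t^2\mathbb{C}[t]\partial_t,\qquad
F_{0,1}(M')=\mathbb{C}[t],\qquad
F_{0,0}(M')=t\,\mathbb{C}[t].
\]
Thus $1\in F_{1,0}(M')\cap F_{0,1}(M')$ while $1\notin F_{0,0}(M')$, so the class of $1$ in $F_{1,0}(M')/F_{0,0}(M')$ is a nonzero element of $\mathrm{gr}^F(\mathcal{R}_V(M'))$ annihilated by $\theta$. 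Moreover this very bifiltration admits an $(F,V)$-bifiltered free resolution (it is displayed in Example~\ref{ex2}), so your proposed repair---deducing the non-zero-divisor property from the existence of such a resolution---cannot work: its hypothesis holds in this example and its conclusion fails.

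What breaks downstream is exactly the control of what happens at $\{\theta=0\}$. In the example above one computes $\mathrm{gr}^F(\mathcal{R}_V(M'))\cong S\oplus S/\theta S$ up to shifts, where $S=\mathrm{gr}^F(D[\theta])$, so $(\theta)$ is an associated prime; for your cone argument you would only need the weaker statement that $\theta$ avoids the \emph{minimal} primes of the annihilator, but you have no argument for that other than through the false non-zero-divisor claim, and a priori $\mathrm{gr}^F(\mathcal{R}_V(M))$ could have large components inside $\{\theta=0\}$. This is precisely what the paper's second lemma rules out, and it does so by a growth estimate rather than by prime avoidance: taking the Bernstein-type filtration $G$ on $D[\theta]$ (all variables of weight $1$), the $T$-degrees occurring in $G_d(\mathcal{R}_V(M))$ lie in an interval of length $2d+c$, whence $\dim_{\mathbb{C}}G_d(\mathcal{R}_V(M))\leq(2d+c)\dim_{\mathbb{C}}F_d(M)$ and, by Gelfand--Kirillov theory, $\dim\mathcal{R}_V(M)\leq 1+\dim M$. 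Some argument of this kind---one that tolerates zero-divisors and possible components at $\theta=0$ by bounding dimensions directly---is needed to close your proof of the inequality $\mathrm{codim}\,M\leq\mathrm{codim}\,\mathcal{R}_V(M)$.
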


\begin{lemma}
$\mathrm{codim}\mathcal{R}_V(M)\leq\mathrm{codim}M$.
\end{lemma}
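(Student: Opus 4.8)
The plan is to pass from codimensions to dimensions of supports and then to produce the missing dimension by inverting $\theta$. Since $\mathrm{gr}^F(D[\theta])=\mathrm{gr}^F(D)[\theta]$ is a commutative polynomial ring with exactly one more variable than $\mathrm{gr}^F(D)$, and the codimension of a finitely generated module over either ring equals the dimension of the ring minus the Krull dimension of the support of the module, the asserted inequality $\mathrm{codim}\,\mathcal{R}_V(M)\le\mathrm{codim}\,M$ is equivalent to
\[
\dim\mathrm{gr}^F(\mathcal{R}_V(M))\ge\dim\mathrm{gr}^F(M)+1.
\]
So it suffices to exhibit, inside the support of $\mathrm{gr}^F(\mathcal{R}_V(M))$, a piece of dimension $\dim\mathrm{gr}^F(M)+1$.

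I would produce that piece by localizing at $\theta$, the crucial point being that $\theta$ is $F$-homogeneous of $F$-degree $0$ (it carries the last weight $0$ in the weight vector $(\mathbf{0},\mathbf{0},\mathbf{1},\mathbf{1},0)$). First I would identify the $F$-filtered localization $\mathcal{R}_V(M)[\theta^{-1}]$. Writing $\mathcal{R}_V(M)=\oplus_k V_k(M)T^k$ with $F_d=\oplus_k F_{d,k}(M)T^k$ and inverting $T=\theta$, the exhaustiveness of the $V$-filtration gives $\bigcup_k F_{d,k}(M)=F_d(M)$, hence
\[
F_d\big(\mathcal{R}_V(M)[\theta^{-1}]\big)=F_d(M)\otimes_{\mathbb{C}}\mathbb{C}[\theta,\theta^{-1}],
\]
so that $\mathcal{R}_V(M)[\theta^{-1}]\simeq M\otimes_{\mathbb{C}}\mathbb{C}[\theta,\theta^{-1}]$ as $F$-filtered modules, with $\theta$ acting in $F$-degree $0$.

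Next, because $\theta$ has $F$-degree $0$, localizing at $\theta$ commutes with passing to $\mathrm{gr}^F$: the localization is the direct limit of the system $\mathcal{R}_V(M)\xrightarrow{\theta}\mathcal{R}_V(M)\xrightarrow{\theta}\cdots$ of $F$-filtered maps of degree $0$, and $\mathrm{gr}^F$ commutes with such direct limits. Combining with the previous step,
\[
\mathrm{gr}^F(\mathcal{R}_V(M))[\theta^{-1}]\simeq\mathrm{gr}^F(M)\otimes_{\mathbb{C}}\mathbb{C}[\theta,\theta^{-1}].
\]
Since localization restricts the support to the open set $\{\theta\neq 0\}$ and so cannot increase its dimension, while tensoring with $\mathbb{C}[\theta,\theta^{-1}]$ raises the dimension by one, I obtain $\dim\mathrm{gr}^F(\mathcal{R}_V(M))\ge\dim\big(\mathrm{gr}^F(\mathcal{R}_V(M))[\theta^{-1}]\big)=\dim\mathrm{gr}^F(M)+1$, which is exactly the inequality required; unwinding the codimension dictionary of the first paragraph yields $\mathrm{codim}\,\mathcal{R}_V(M)\le\mathrm{codim}\,M$.

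The step I expect to be most delicate is the commutation of $\mathrm{gr}^F$ with the localization at $\theta$: one must check that the $F$-filtration induced on $\mathcal{R}_V(M)[\theta^{-1}]$ is exactly the localized filtration $F_\bullet(\mathcal{R}_V(M))[\theta^{-1}]$, so that no associated-graded piece is lost in the direct limit, and this is precisely where the vanishing of the $F$-degree of $\theta$ is essential. In terms of the presentation $\mathrm{gr}^F(\mathcal{R}_V(M))=\mathrm{gr}^F(D[\theta])^r/\mathrm{in}^F(H^V(N))$, this amounts to the identity $\mathrm{in}^F(H^V(N))[\theta^{-1}]=\mathrm{in}^F(N)[\theta,\theta^{-1}]$, i.e. to the commutation of the $V$-homogenization with the formation of the $F$-initial ideal once $\theta$ is made invertible; the inclusion giving the sharp bound is the one realizing each $\mathrm{in}^F(P)$, for $P\in N$, as $\theta^{-e}\,\mathrm{in}^F(H^V(P))$ for a suitable $e\ge 0$.
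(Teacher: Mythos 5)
Your proof is correct, and it takes a genuinely different route from the paper's. The paper argues homologically: it invokes the characterization $\mathrm{codim}\,M=\inf\{i:\mathrm{Ext}^i_D(M,D)\neq 0\}$ and its analogue over $\mathcal{R}_V(D)$ (citing Hotta--Takeuchi--Tanisaki), takes a $V$-adapted free resolution $\mathcal{L}_\bullet\to M$, applies the Rees functor to get a graded free resolution of $\mathcal{R}_V(M)$, and shows that for each $i$ the homology $H_i(\mathcal{R}_V(\mathcal{L})^*)$ surjects onto $\mathcal{R}_V\bigl(H_i(\mathcal{L}^*)\bigr)$; hence vanishing of $\mathrm{Ext}^i_{\mathcal{R}_V(D)}(\mathcal{R}_V(M),\mathcal{R}_V(D))$ forces vanishing of $\mathrm{Ext}^i_D(M,D)$, which gives the inequality. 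You instead work with supports: you dehomogenize by inverting the central Rees parameter $\theta$, identify $\mathcal{R}_V(M)[\theta^{-1}]\simeq M\otimes_{\mathbb{C}}\mathbb{C}[\theta,\theta^{-1}]$ as $F$-filtered modules (using exhaustiveness of the $V$-filtration, which for the presentation-induced filtrations indeed gives $\bigcup_k F_{d,k}(M)=F_d(M)$), use that $\theta$ has $F$-degree $0$ so that $\mathrm{gr}^F$ commutes with the filtered colimit defining the localization, and conclude $\dim\mathrm{gr}^F(\mathcal{R}_V(M))\geq\dim\mathrm{gr}^F(M)+1$, which, since $\mathrm{gr}^F(D[\theta])$ has exactly one more variable than $\mathrm{gr}^F(D)$, is equivalent to the assertion; all of these steps are sound. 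What each approach buys: yours is more elementary and self-contained, needing neither the Ext-characterization of codimension (hence no Auslander-regularity input) nor the existence of $V$-adapted free resolutions, and it makes geometrically transparent where the inequality can be strict, namely when $\mathrm{gr}^F(\mathcal{R}_V(M))$ has support components inside $\{\theta=0\}$ --- which also explains why your localization argument cannot yield the reverse inequality, for which the paper uses a separate Gelfand--Kirillov dimension estimate. The paper's homological route, in turn, gives finer degree-by-degree information (every nonvanishing $\mathrm{Ext}^i_D(M,D)$ forces the corresponding $\mathrm{Ext}^i$ over $\mathcal{R}_V(D)$ to be nonzero, not just the lowest one) and stays uniform with the filtered-resolution machinery used throughout the paper.
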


\begin{proof}
We make use of the characterization of the codimension by means of extension groups (see R.\ Hotta, K.\ Takeuchi and T.\ Tanisaki\cite{tanisaki}, Theorem D.4.3):
\begin{eqnarray*}
\mathrm{codim}M & = & 
\mathrm{inf}\{i: \mathrm{Ext}^i_{D}(M,D)\neq 0\},\\
\mathrm{codim}\mathcal{R}_V(M) & = & 
\mathrm{inf}\{i: \mathrm{Ext}^i_{\mathcal{R}_V(D)}(\mathcal{R}_V(M),\mathcal{R}_V(D))\neq 0\}.
\end{eqnarray*}
Let 
\[
\cdots\to\mathcal{L}_1\stackrel{\phi_1}{\to}\mathcal{L}_0\stackrel{\phi_0}{\to} M\to 0
\]
be a $V$-adapted free resolution of $M$, with 
$\mathcal{L}_i=D^{r_i}[\mathbf{m}^{(i)}]$.
It induces a graded free resolution
\[
\cdots\to\mathcal{R}_V(\mathcal{L}_1)\stackrel{\mathcal{R}_V(\phi_1)}{\to}
\mathcal{R}_V(\mathcal{L}_0)\stackrel{\mathcal{R}_V(\phi_0)}{\to} 
\mathcal{R}_V(M)\to 0,
\]
with $\mathcal{R}_V(\mathcal{L}_i)\simeq D[\theta]^{r_i}[\mathbf{m}^{(i)}]$.

If $N$ is a left $D$-module, let $N^*=\mathrm{Hom}_D(N,D)$. If $N$ is a left $\mathcal{R}_V(D)$-module, let 
$N^*=\mathrm{Hom}_{\mathcal{R}_V(D)}(N,\mathcal{R}_V(D))$.

The complex $\mathcal{L}^*:$
\[
0\to\mathcal{L}_0^*\stackrel{\phi_1^*}{\to}
\mathcal{L}_1^*\stackrel{\phi_2^*}{\to}\cdots 
\]
gives the groups $\mathrm{Ext}^i_{D}(M,D)$.
The complex $\mathcal{R}_V(\mathcal{L})^*:$
\[
0\to\mathcal{R}_V(\mathcal{L}_0)^*\stackrel{\mathcal{R}_V(\phi_1)^*}{\to}
\mathcal{R}_V(\mathcal{L}_1)^*\stackrel{\mathcal{R}_V(\phi_2)^*}{\to}\cdots
\]
gives the groups 
$\mathrm{Ext}^i_{\mathcal{R}_V(D)}(\mathcal{R}_V M,\mathcal{R}_VD)$.

If $N$ is a left $D$-module endowed with a good $V$-filtration, then we endow $N^*$ by the exhaustive filtration 
\[
V_k(N^*)=
\{u:N\to D\ \textrm{such that}\ \forall j, u(V_j(N))\subset V_{j+k}(D)\}.
\]
Let us consider $\phi_i:\mathcal{L}_i\to\mathcal{L}_{i-1}$ together with 
$\phi_i^*:\mathcal{L}_{i-1}^*\to\mathcal{L}_{i}^*$.
We endow $\mathrm{ker}(\phi_i^*)$ with the induced $V$-filtration.
We claim that 
\[
\mathcal{R}_V(\mathrm{ker}(\phi_i^*))\simeq\mathrm{ker}(\mathcal{R}_V(\phi_i)^*).
\]
Indeed, let $\mathcal{L}=D^r[\mathbf{m}]$ be a $V$-filtered free module.
We have a bijection $\mathcal{R}_V(\mathcal{L}^*)\simeq (\mathcal{R}_V\mathcal{L})^*$, by mapping $\oplus u_kT^k$, with $u_k\in V_k(\mathcal{L}^*)$, to $\sum\mathcal{R}_V(u_k)\in (\mathcal{R}_V\mathcal{L})^*$.
Under that bijection, $\mathcal{R}_V(\mathrm{ker}(\phi_i^*))\subset \mathrm{R}_V(\mathcal{L}_{i-1}^*)$ is seen as a subset of $(\mathcal{R}_V\mathcal{L}_{i-1})^*$ and is equal to $\mathrm{ker}(\mathcal{R}_V(\phi_i)^*)$.

On the other hand, let us endow $\mathrm{Im}(\phi_i^*)$ with the induced $V$-filtration. Using the identification $\mathcal{R}_V(\mathcal{L}_i^*)\simeq (\mathcal{R}_V\mathcal{L}_i)^*$, we have
\[
\mathrm{Im}(\mathcal{R}_V(\phi_i)^*)\subset
 \mathcal{R}_V(\mathrm{Im}(\phi_i^*)).
\]

Then if $H_i(\mathcal{L}^*)=\mathrm{ker}(\phi_{i+1}^*)/\mathrm{Im}(\phi_i^*)$ is endowed with the quotient $V$-filtration, we have
\[
H_i(\mathcal{R}_V(\mathcal{L})^*)=
\frac{\mathrm{ker}(\mathcal{R}_V(\phi_{i+1})^*)}
{\mathrm{Im}(\mathcal{R}_V(\phi_i)^*)}
\twoheadrightarrow \frac{\mathcal{R}_V(\mathrm{ker}(\phi_{i+1}^*))}
{\mathcal{R}_V(\mathrm{Im}(\phi_i^*))}
=\mathcal{R}_V(H_i(\mathcal{L}^*)).
\]
Thus if $H_i(\mathcal{R}_V(\mathcal{L})^*)=0$, then $\mathcal{R}_V(H_i(\mathcal{L}^*))=0$ which implies that 
$H_i(\mathcal{L}^*)=0$. The lemma follows.
\end{proof}

\begin{lemma}
$\mathrm{codim}M\leq\mathrm{codim}\mathcal{R}_V(M)$.
\end{lemma}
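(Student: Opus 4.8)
The plan is to use the same Ext-characterisation of codimension as in the previous lemma, now to bound $\mathrm{codim}\,\mathcal{R}_V(M)$ from below. Writing $S=\mathcal{R}_V(D)\simeq D[\theta]$, it suffices to prove that $\mathrm{Ext}^i_S(\mathcal{R}_V(M),S)=0$ for every $i<\mathrm{codim}\,M$. The device I would exploit is that $\theta$ is a central, \emph{homogeneous} non-zero-divisor on $S$ and on $\mathcal{R}_V(M)$, of degree $1$, with $S/\theta S\simeq\mathrm{gr}^V(D)\simeq D$ and $\mathcal{R}_V(M)/\theta\mathcal{R}_V(M)\simeq\mathrm{gr}^V(M)$. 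This yields the short exact sequence of graded $S$-modules
\[
0\to\mathcal{R}_V(M)\stackrel{\theta}{\to}\mathcal{R}_V(M)\to\mathrm{gr}^V(M)\to 0,
\]
which is the homogeneous analogue of the inhomogeneous specialisation $\theta\mapsto 1$ that recovers $M$ itself.

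Next I would apply $\mathrm{Hom}_S(-,S)$. Since $\theta$ is a non-zero-divisor on $S$ with $S/\theta S\simeq D$, the standard change-of-rings isomorphism $\mathrm{Ext}^i_S(\mathrm{gr}^V(M),S)\simeq\mathrm{Ext}^{i-1}_D(\mathrm{gr}^V(M),D)$ holds. Feeding this into the long exact sequence, in the range $i<\mathrm{codim}_D(\mathrm{gr}^V(M))$ the two neighbouring terms vanish, so multiplication by $\theta$ is surjective (indeed bijective) on $E^i:=\mathrm{Ext}^i_S(\mathcal{R}_V(M),S)$. Now $E^i$ is a finitely generated graded $S$-module, hence bounded below in degree, while $\theta$ raises degree by one; a graded Nakayama argument then forces $E^i=0$. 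This establishes $\mathrm{codim}\,\mathcal{R}_V(M)\geq\mathrm{codim}_D(\mathrm{gr}^V(M))$.

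The remaining, and genuinely delicate, point — which I expect to be the main obstacle — is the inequality $\mathrm{codim}_D(\mathrm{gr}^V(M))\geq\mathrm{codim}\,M$, that is, that passing to the associated graded with respect to the \emph{good} $V$-filtration does not lower the codimension (in fact one has equality). This is the dimension-preservation property of good filtrations, and I would either invoke it in the form used by Y.\ Laurent and T.\ Monteiro-Fernandes\cite{laurent88}, or prove it directly by comparing the Hilbert functions of the bigraded modules $\mathrm{gr}^F\mathrm{gr}^V(M)$ and $\mathrm{gr}^F(M)$, equivalently by using that $\mathcal{R}_V(M)$ is flat over $\mathbb{C}[\theta]$, so that its fibres over $\theta=0$ and $\theta=1$ have the same dimension. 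Combined with the preceding paragraph, this gives $\mathrm{codim}\,M\leq\mathrm{codim}\,\mathcal{R}_V(M)$.

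Finally I would record why the naive specialisation is not enough by itself: localising at $\theta$ yields, by flat base change, $E^i[\theta^{-1}]\simeq\mathrm{Ext}^i_D(M,D)\otimes_{\mathbb{C}}\mathbb{C}[\theta,\theta^{-1}]$, so $E^i$ is $\theta$-torsion for $i<\mathrm{codim}\,M$; and the specialisation $\theta\mapsto 1$ only shows that $\theta-1$ acts bijectively on $E^i$, a property that a nonzero $\theta$-torsion module can perfectly well satisfy. It is exactly the \emph{homogeneity} of $\theta$, through the graded Nakayama step applied to the bounded-below module $E^i$, that upgrades this information to the vanishing $E^i=0$.
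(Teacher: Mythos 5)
Your strategy is genuinely different from the paper's: the paper never dualizes, but instead compares Hilbert functions — it puts the Bernstein filtration $F$ on $M$ and the total-degree filtration $G$ on $D[\theta]$, observes $G_d(\mathcal{R}_V M)\subset\oplus_{k\in E_d}F_d(M)T^k$ over an interval $E_d$ of length $2d+c$, and concludes $\dim\mathcal{R}_V(M)\le\dim M+1$ by Gelfand--Kirillov theory, which gives the codimension inequality since $\mathrm{gr}^G(D[\theta])$ has exactly one more variable than $\mathrm{gr}^F(D)$. Your homological route (the sequence $0\to\mathcal{R}_V(M)\stackrel{\theta}{\to}\mathcal{R}_V(M)\to\mathrm{gr}^V(M)\to 0$, Rees change of rings, localisation at $\theta$) could be made to work, but as written it has a genuine gap at the graded Nakayama step. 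The claim that $E^i$, being a finitely generated graded $S$-module, is ``bounded below in degree'' is false: the grading on $S=\mathcal{R}_V(D)$ has $S_k=V_k(D)T^k\neq 0$ for \emph{every} $k\in\mathbb{Z}$ — equivalently, under $S\simeq D[\theta]$ the variables $t_i$ sit in degree $-1$ — so $S$ itself is unbounded below. The Nakayama principle you invoke actually fails in this ring: $t_1\theta-1$ is homogeneous of degree $0$, and $E=S/S(t_1\theta-1)$ is a nonzero, finitely generated, graded left $S$-module on which $\theta$ acts bijectively. Surjectivity of $\theta$ on a finitely generated graded module therefore forces nothing.

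Ironically, your final paragraph has the diagnosis backwards: the $\theta$-torsion property, which you dismiss as insufficient, is what rescues the argument, while the boundedness you lean on is unavailable. For $i<\mathrm{codim}\,M$ your localisation step shows $E^i$ is $\theta$-torsion; being finitely generated, it is killed by $\theta^N$ for some $N$, and if in addition $\theta E^i=E^i$ (your long-exact-sequence step), then $E^i=\theta^NE^i=0$. Note that this repaired argument yields vanishing only for $i<\min(\mathrm{codim}\,M,\,\mathrm{codim}\,\mathrm{gr}^V(M))$, so you truly cannot avoid your third ingredient, the inequality $\mathrm{codim}\,\mathrm{gr}^V(M)\geq\mathrm{codim}\,M$ — and there lies a second gap. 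The proposed shortcut ``$\mathcal{R}_V(M)$ is flat over $\mathbb{C}[\theta]$, so the fibres over $\theta=0$ and $\theta=1$ have the same dimension'' is not a proof: the relevant dimension is that of the characteristic variety, i.e.\ of $\mathrm{gr}^F$ of each fibre, and flatness over $\mathbb{C}[\theta]$ by itself says nothing about how $\mathrm{gr}^F$ behaves under the two specialisations; making this precise is essentially the theorem of Laurent and Monteiro-Fernandes\cite{laurent88}. Invoking that theorem is legitimate, but it outsources the hardest content of the lemma to a result substantially heavier than the short, self-contained Gelfand--Kirillov estimate the paper uses.
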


\begin{proof}
Here we make use of the theory of Gelfand-Kirillov dimension, see e.g.\ 
G.G.\ Smith\cite{smith}. 
Let now $F$ denote the Bernstein filtration on $D$, i.e. each variable $x_i,\partial_{x_i},t_i,\partial_{t_i}$ has weight $1$. We endow $M$ with the good $(F,V)$-bifiltration, still denoted by $F_{d,k}(M)$, given by the quotient $D^r[\mathbf{0}][\mathbf{m}]/N$.  

Let $\phi:\mathbb{N}\to \mathbb{R}$. Let 
$\gamma(\phi)=\mathrm{inf}\{i: f(d)\leq d^i\ \textrm{for $d$ large enough}\}$. By Gelfand-Kirillov theory, we have 
$\gamma(d\mapsto \textrm{dim}_{\mathbb{C}}F_d(M))=\textrm{dim}M=2n-\textrm{codim}M$.

 Let us define the filtration $(G_d(D[\theta]))$ by giving the weight $1$ to all the variables. Then $\textrm{gr}^G(D[\theta])$ is commutative and for any $d$, $G_d(D[\theta])$ is finitely dimensional over $\mathbb{C}$.
Then we endow $\mathcal{R}_V(M)$ with the $G$-filtration $G_d(\mathcal{R}_V(M))=G_d(D[\theta]^r[\mathbf{0}]/H^V(N))$.

Let $d\in\mathbb{N}$ and $E_d$ be the interval $[\textrm{min}_im_i-d,\textrm{max}_im_i+d]$. We have
\[
G_d(\mathcal{R}_VM)\subset \bigoplus_{k\in E_d}F_{d,k}(M)T^k
\subset \bigoplus_{k\in E_d}F_{d}(M)T^k.
\]
Then $\textrm{dim}_{\mathbb{C}}G_d(\mathcal{R}_V(M))
\leq (2d+c)\textrm{dim}_{\mathbb{C}}F_d(M)$, with 
$c=\textrm{max}_im_i-\textrm{min}_im_i+1$.
Thus 
\begin{eqnarray*}
\textrm{dim}\mathcal{R}_V(M) & = &\gamma(d\mapsto \textrm{dim}_{\mathbb{C}}G_d(\mathcal{R}_V(M)))\\
 & \leq & \gamma(d\mapsto (2d+c)\textrm{dim}_{\mathbb{C}}F_d(M))\\
 & = & 1+\gamma(d\mapsto\textrm{dim}_{\mathbb{C}}F_d(M))\\
 & = & 1+\textrm{dim}M.
\end{eqnarray*}
\end{proof}

\section*{Acknowledgements}

I sincerely thank M.\ Granger and T.\ Oaku for their comments and the Japan Society for the Promotion of Science for the financial support.

\end{document}